\documentclass[12pt,reqno]{amsart}

\usepackage{amsmath, amssymb, amsfonts, amsbsy, amsthm, latexsym, epsfig}
\usepackage{dirtytalk}
\textwidth      6.5in
\oddsidemargin  0.0in
\evensidemargin 0.0in
\setlength{\topmargin}{.0in}
\setlength{\textheight}{8.5in}

\theoremstyle{definition}
\newtheorem{theorem}{Theorem}

\theoremstyle{definition}
\newtheorem{corollary}[theorem]{Corollary}
\newtheorem{lemma}[theorem]{Lemma}
\newtheorem{proposition}[theorem]{Proposition}
\newtheorem{definition}[theorem]{Definition}

\newtheorem*{theorem*}{Theorem}
\newtheorem*{corollary*}{Corollary}
\newtheorem*{proposition*}{Proposition}
\newtheorem*{lemma*}{Lemma}
\newtheorem*{remark*}{Remark}
\numberwithin{equation}{section}

\begin{document}

\title{Frames via Unilateral Iterations of a Bounded Operator}

\author{Victor Bailey}

\address{Department of Mathematics,
University of Oklahoma,
Norman, OK 73019 USA}
\email{victor.bailey@ou.edu}

\maketitle 
\begin{abstract}
Motivated by recent work in Dynamical Sampling, we prove a necessary and sufficient condition for a frame in a separable and infinite-dimensional Hilbert space to admit the form $\{T^{n} \varphi \}_{n \geq 0}$ with $T \in B(H)$. Also, a characterization of all the vectors $\varphi$ for which $\{T^{n} \varphi \}_{n \geq 0}$ is a frame for some $T \in B(H)$ is provided. Some auxiliary results on operator representations of Riesz frames are given as well.  %In particular, we show that a frame, $\{f_n\}_{n \in \mathbb{N}_0}$, admits the form $\{T^{n} \varphi \}_{n \geq 0}$ with $T \in B(H)$ if and only if the frame is a Riesz basis or it is linearly independent and the system, $\{S^nc\}_{n \in \mathbb{N}_{0}}$, is a Parseval frame for the kernel of the synthesis operator (where $S$ is the right-shift operator on $\ell^2(\mathbb{N}_{0})$ and $c$ is some fixed scalar sequence).
\end{abstract}
\section
{Introduction}

Frames were first introduced in the foundational paper \cite{SD52} by Duffin and Schaeffer. These are systems of vectors in a separable Hilbert Space, $H$, that admit basis-like expansions for each element of $H$. That is, for a system $\{f_n\}_{n \in I} \subset H $, with $I$ countable, that satisfies the so-called frame condition, each vector $f\in H$ can be given a series representation in terms of the frame elements so that $f = \underset{n \in I}{\sum} c_n \, f_n $
where $\{c_n\}_{n \in I} \in \ell^2(I)$ and convergence is in the norm of $H$. For a basis, expansions of this form are unique. However for a frame these expansions are not necessarily unique so that, in a sense, a frame can be viewed as a generalized basis for $H$. A system $\{f_n\}_{n \in I}$ is said to be a frame if there exist fixed constants $ 0 < C_{1} \leq C_{2}$ such that for each $ f \in H$,  \begin{displaymath} C_{1} \|f\|^2 \leq \sum_{n \in I} | \langle f , f_n \rangle |^2 \leq C_{2} \|f\|^2. \quad 
\end{displaymath}
\quad The Dynamical Sampling problem \cite{AA16} was inspired from a paper \cite{LV09} by Lu and Vetterli where they sought to reconstruct a vector using an insufficient spacial sampling density by employing an evolution operator to compensate for the reduced spacial sampling. In Dynamical Sampling, the goal is the same, however we no longer consider the domain of the vectors in $H$ when obtaining samples and instead seek to recover any vector $f \in H$ using the samples $\{{\langle A^n f, g\rangle}\}_{{0 \leq n \leq \Psi (g)} , \, g \, \in \, G}$
where $A \in B(H)$, $G$ is a countable subset of $H,$ and $\Psi$ is a map from $G$ to $\mathbb{N}_0 \cup \{\infty\}$ (where $\mathbb{N}_0 = \mathbb{N} \cup \{0\}$) . Since the samples given above are the same as $\{{\langle f, (A^*)^n g\rangle}\}_{{0 \leq n \leq \Psi (g)} , \, g \, \in \, G}$ we find that any vector $f \in H$ can be stably reconstructed from the samples above if and only if $\{(A^*)^n g\}_{{0 \leq n \leq \Psi (g)} , \, g \, \in \, G}$ is a frame for $H$. Due to this, we forgo the adjoint and seek conditions for an operator $T$, the set $G$, and the map $\Psi$ such that the system $\{T^n g\}_{{0 \leq n \leq \Psi (g)}, \, g \, \in \, G}$ is a frame or has properties such as completeness, exactness, etc.
\newline
%Many recent results have been found in the case where the set $G$ consists of a single vector. In this case, the Dynamical Sampling problem is to find conditions on a vector $\varphi$ and an operator $T \in B(H)$ such that $\{T^n\varphi \}_{n \geq 0}$ provides basis-like expansions of vectors in $H$ or is useful in some way for reconstructing vectors in $H$. 

A complete solution to the Dynamical Sampling problem in the finite-dimensional Hilbert space setting was provided in \cite{ACMT17}. Many recent works \cite{CH21} consider the problem in the infinite-dimensional setting and typically the problem is posed in the following way: Given an operator $T \in B(H)$ and a vector $\varphi \in H$, what are conditions for the system $\{T^n \varphi\}_{n \geq 0}$ to be a frame, basis, Bessel, complete, minimal, etc.? Here the set $G$ consists of only one vector so that infinitely many iterations of the operator will be needed to even obtain a complete set in $H$ when $H$ is infinite dimensional. In this version of the Dynamical Sampling problem we do not require $T$ to be invertible, but ask for conditions on the operator $T$ and vector $\varphi$ so that the system $\{T^n \varphi\}_{n \geq 0}$ is useful in some way (e.g. allowing basis-like expansions of arbitrary vectors in $H$) for reconstructing the Hilbert space $H$.
However, many of the results on whether a system of vectors that are given by iterations of some bounded operator on a fixed vector allow for expansions of arbitrary vectors in $H$ via that system, are negative results. By \cite{ACM16}  it holds that if $T \in B(H)$ is a normal operator, then $\{T^n \varphi\}_{n \geq 0}$ can never be a basis. Using the Spectral Theorem it is shown that a system of this form can never be both complete and minimal. This implies that such a system can never be a basis as all bases are exact systems. In \cite{{AA16}} it is shown that if $\{T^n \varphi\}_{n \geq 0}$ is a frame for $H$, then $(T^*)^n f \to 0$ as $n \to \infty$ so that given $T$ is unitary, then $\{T^n \varphi\}_{n \geq 0}$ can never be a frame. By \cite{CH21} it holds that if $T$ is compact, then $\{T^n \varphi\}_{n \geq 0}$ can never be a frame. Also, from \cite{CH21} it holds that if $T$ is a hypercyclic operator (that is, if there exists an $h \in H$ such that $\{T^n h \}_{n \geq 0}$ is dense in $H$), then $\{T^n \varphi\}_{n \geq 0}$ can never be a frame. In \cite{ACM16} by applying both  Feichtinger's Theorem and Müntz–Szász Theorem \cite{Hei11} it is shown that if $T$ is self-adjoint, then $\{ \frac{T^n \varphi}{ \|T^n \varphi \|}\}_{n \geq 0}$ can never be a frame. 
\newline

In light of this litany of negative results it seems to beg the question as to whether it is reasonable to expect that one can generate a frame or basis using iterations of an operator on a fixed vector. It seems to suggest that it would be useful to know properties that all frames of the form $\{T^n \varphi\}_{n \geq 0}$, for some bounded operator $T$, exhibit in order to determine conditions when these systems allow for expansions of arbitrary vectors in $H$. A problem related to Dynamical Sampling addresses this question. In particular, in recent works \cite{CH21, CH16} authors determine under what conditions can a given frame for $H$, $\{f_n\}_{n \in I}$, be represented in the form $\{T^n\varphi \}_{n \geq 0}$. That is, conditions are provided so that a given frame can be rewritten as a frame obtained by iterations of a bounded operator on a single vector. Applying results from this area, a solution to the Dynamical Sampling problem in this setting was provided in \cite{CHP20}. 
\newline

In this work we will make extensive use of the Hardy Space  \begin{displaymath}
   H^2(\mathbb{T}) = \{f \in L^2(\mathbb{T}) : \int_{\mathbb{T}} f(z) \overline{z}^n dz = 0 \, \,  \forall \, n<0 \}.
\end{displaymath}
That is the collection of square-integrable functions on the unit circle with vanishing negative Fourier coefficients. 
There is a natural identification between functions in 
\begin{displaymath}
    H^2(\mathbb{D}) = \{ f: \mathbb{D} \to \mathbb{C} \, \, \text{such that} \, \, f(z) = \underset{n \geq 0}{\sum} c_{n}z^n \, \, where \, \, \, (c_{n}) \in \ell^2(\mathbb{N}_0)\},
\end{displaymath} 
 the space of analytic functions on $\mathbb{D}$, the open unit disk in $\mathbb{C}$, with power series representations with square-summable complex coefficients, and those in $H^2(\mathbb{T})$. These Hilbert spaces are isomorphic and we can alternatively define $H^2(\mathbb{T})$ as  the radial limits of functions in $H^2(\mathbb{D})$. 
 \newline
 
An important unitary map between the sequence space $\ell^2(\mathbb{N}_0)$ and the Hardy-Hilbert Space is
\begin{displaymath}
A: \ell^2(\mathbb{N}_0) \to H^2(\mathbb{T}) 
\end{displaymath}
\begin{displaymath}
A(c_0, c_1, \ldots) = \underset{n \geq 0}{\sum} c_{n}z^n.
\end{displaymath}
The map $A$ is a unitary operator that satisfies $AR=SA$ where $R$ is the right-shift operator on $\ell^2(\mathbb{N}_0)$ and $S$ is the unilateral shift operator on $H^2(\mathbb{T})$ defined by  $(Sf)(z)=zf(z)$ for $f \in H^2(\mathbb{T})$.
\newline

There is a deep connection to the between Dynamical Sampling and the theory of Hardy Spaces which can be shown using a result from \cite{CH16}. In \cite{CH16} is shown that in order for a frame $\{f_k\}_{k \in I} \subset H$ to admit the form required in Dynamical Sampling, $\{T^n\varphi \}_{n \geq 0}$ where $T \in B(H)$, the kernel of the synthesis operator of the frame must be invariant under the right-shift operator on $\ell^2(\mathbb{N}_0)$. An important observation is that as right-shift invariant subspaces of $\ell^2(\mathbb{N}_0)$ correspond to invariant subspaces of the unilateral shift on $H^2(\mathbb{T})$ via the map $A$ given above. As shift invariant subspaces of $H^2(\mathbb{T})$ are all of Beurling-type \cite{B48}, this has important implications for the frames that are generated by iterations of a bounded operator on a single vector which we will see in the sequel. 
\newline
%Combining Theorem 15, Proposition 16, and Proposition 18 our main result is the statement immediately below.
%\begin{theorem*}
%Let $\{f_n\}_{n \in \mathbb{N}_0}$ be a frame for $H$. Then the following are equivalent:

%$(i.)$ $\{f_n\}_{n \in \mathbb{N}_0}$  admits the form $\{T^{n} \varphi \}_{n \geq 0}$ with $T \in B(H)$.
%\medskip

%$(ii.)$  $\{f_n\}_{n \in \mathbb{N}_0}$ is a Riesz basis or it is linearly independent and $\{S^{n} c\}_{n \geq 0}$ is a Parseval frame for $Ker(U)$, the kernel of the synthesis operator, for some $c \in \ell^2(\mathbb{N}_0)$ such that $A(c) \in H^2(\mathbb{T})$ is an inner function that is not a finite Blaschke product. 
%\end{theorem*}
We will also give additional results regarding operator orbit representations for Riesz frames, which are frames that satisfy the subframe property with uniform frame bounds. For details regarding of Riesz frames, we refer to \cite{CA02, Chr03}.

\section
{Results}

\begin{definition}
Given a frame $\{f_k\}_{k \in I} \subset H$ we define $U: l^2(I) \to H$ where $U \{c_k\}_{k} = \sum_{k} c_k \, f_k $ as the synthesis operator associated with the frame.
\end{definition}

If $C_2$ be the upper frame bound of a given frame, then we have  $\|Uc\|_{H} \leq \sqrt{C_2} \, \|c\|_{\ell^2(\mathbb{N}_0)}$ for all scalar sequences $c \in \ell^2(\mathbb{N}_0)$. Therefore, the synthesis operator, $U$, is a bounded linear operator.

\begin{definition}
The Lebesgue space 
\begin{displaymath}
L^2({\mathbb{T}}) = \{f: \mathbb{T} \to \mathbb{C} \, \, \text{such that} \, \,  \biggl(\int_{\mathbb{T}} |f(z)|^2 \, dz \biggl)^{1/2} < \infty\}
\end{displaymath} is a Hilbert space with inner product given by $\langle f, g \rangle = \int_{\mathbb{T}} f(z) \, \overline{g(z)} \, dz$.
\end{definition}

\begin{definition}
The Hardy Space
\begin{displaymath}
H^2(\mathbb{T})= \{f \in L^2(\mathbb{T}) : \int_{\mathbb{T}} f(z) \, \overline{z}^n \, dz = 0 , \, \forall n<0 \}
\end{displaymath} is a subspace of $L^2({\mathbb{T}})$ consisting of functions with vanishing negative Fourier coefficients.
\end{definition}

Note that if $f \in H^2(\mathbb{T})$, then $f(z) = \underset{n \geq 0}{\sum} c_{n} z^n$ and for any 
$g, h  \in H^2(\mathbb{T})$ with $h(z) = \underset{n \geq 0}{\sum} a_{n} z^n$ and $g(z) = \underset{n \geq 0}{\sum} b_{n} z^n$, 
   we have $\langle g, h \rangle =  \underset{n \geq 0}{\sum} a_{n}\overline{b_{n}}$.
   
\begin{definition}
A subspace of a Hilbert space, is a subset of $H$ that is closed in the topological sense in addition to being closed under the vector space operations.
\end{definition}
Here we have used the definition of subspace given in \cite{MR07}. Note that the kernel of any bounded linear operator is always a topologically closed vector space.
\begin{definition}
The right-shift operator on $\ell^2(\mathbb{N}_0)$ is the map $R: \ell^2(\mathbb{N}_0) \to \ell^2(\mathbb{N}_0)$ where $R (x_0, x_1, x_2 \ldots) = (0, x_0, x_1, x_2, \ldots)$.
\end{definition}

\begin{definition}
Given an operator $\Psi \in B(H)$ and a subspace $W \subset H$, if $\Psi (W) \subset W$, then $W$ is an invariant subspace for $\Psi$ (or invariant under $\Psi$). If both $W$ and $W^{\perp}$ are invariant subspaces for $\Psi$, then $W$ is a reducing subspace for $\Psi$.
\end{definition}
\begin{proposition}
If $\{f_n\}_{n \geq 0}$ is a frame such that $Ker(U)$ is a reducing subspace of the right-shift operator, then $\{f_n\}_{n \geq 0}$ is a Riesz basis.
\end{proposition}

\begin{proof}
Given $Ker(U)$ is a reducing subspace of the right-shift operator, by Theorem 2.2.1 in \cite{MR07} we have $Ker(U) = \ell^2(\mathbb{N}_0)$ or $Ker(U) = \{0\}$. Since $\{f_n\}_{n \geq 0}$ is complete, this implies $Ker(U) = \{0\}$. It follows that  $\{f_n\}_{n \geq 0}$ is a Riesz basis. 
\end{proof}
\medskip

\begin{definition}
    Let $A: \ell^2(\mathbb{N}_0) \to H^2(\mathbb{T})$, where $A(c_0, c_1, \ldots) = \underset{n \geq 0}{\sum} c_{n}z^n$.
\end{definition}
Note that the map $A$ is unitary and $ARA^* = S$ by Theorem 2.1.9 in \cite{MR07}.
\begin{lemma}
 Let $W \subset \ell^2(\mathbb{N}_0)$ be a non-trivial subspace such that $A(W)= E \subset H^2(\mathbb{T})$. Then $E$ is invariant under $S$ if and only if $W \subset \ell^2(\mathbb{N}_0)$ is invariant under $R$.
\end{lemma}

\begin{proof}
 Given $A(W)= E \subset H^2(\mathbb{T})$, $E$ is a non-trivial subspace of $H^2(\mathbb{T})$. Suppose $E$ is invariant under $S$. Then $S(E) \subset E$. That is 
 \begin{displaymath}
     S(E) = ARA^*(E) = A(R(W)) \subset E.
 \end{displaymath}
  Since $A$ is injective, if $R(W) \not\subset W$, then 
  \begin{displaymath}
      S(E) = A(R(W)) \not\subset A(W) = E.
  \end{displaymath}
   Similarly, if $W \subset \ell^2(\mathbb{N}_0)$ is invariant under $R$ then $E$ is invariant under $S$.
\end{proof}
\medskip

The following result was proved for frames indexed by $\mathbb{Z}$ in \cite{CH16}, but an effortless reworking shows that the theorem holds for frames indexed by $\mathbb{N}_0$ as well. 

\begin{theorem}
Let $\{f_n\}_{n \geq 0}$ be a frame admitting the form $\{T^{n} \varphi \}_{n \geq 0}$ for some linear operator $T: \text{\text{span}} \{f_n\}_{n \geq 0} \to \text{\text{span}} \{f_n\}_{n \geq 0}$. Then $T$ is bounded if and only if $Ker(U)$ is invariant under right-shifts.
\end{theorem}

\begin{definition}
An invariant subspace, $W$, for an operator, $\Phi$, is cyclic if there exists a vector $g$ such that $W = \overline{\text{\text{span}}} \{\Phi^{n} g : n \geq 0\}$. That is, $W$ is generated by the vectors $\{\Phi^{n} g\}_{n \geq 0}$.
\end{definition}
\begin{theorem}
Let $\{f_n\}_{n \geq 0}$ be a frame. Then the following are equivalent:

$(i.)$ $\{f_n\}_{n \geq 0}$  admits the form $\{T^{n} \varphi \}_{n \geq 0}$ with $T \in B(H)$.

\medskip
$(ii.)$  $\{f_n\}_{n \geq 0}$ is either a Riesz basis or it is linearly independent and $Ker(U)$ is generated by the vectors $\{R^{n} c\}_{n \geq 0}$ for some fixed scalar sequence $c \in \ell^2(\mathbb{N}_0)$ such that $A(c) \in H^2(\mathbb{T})$ is an inner function. 
\end{theorem}

\begin{proof}
By Proposition 2.3 in \cite{CH19}, a frame $\{f_n\}_{n \geq 0}$  admits the form $\{T^{n} \varphi \}_{n \geq 0}$ for some linear (but not necessarily bounded) operator $T$ if and only if the frame is linearly independent. By Theorem 10, the operator $T$ is bounded if and only if $Ker(U)$ is invariant under right-shifts. Thus (given the frame is linearly independent) we only need to prove equivalence of $(ii)$ and the property that $Ker(U)$ is invariant under right-shifts. Suppose $Ker(U)$ is invariant under right-shifts. Then, by Lemma 9, its image under $A$ in $H^2(\mathbb{T})$ is an invariant subspace of the operator $S$. That is, $E = A(Ker(U)) \subset H^2(\mathbb{T})$ has the property that $S(E) \subset E$. Hence, by Beurling’s Theorem \cite{B48} we have $E = \{0\}$ or $E = \psi H^2(\mathbb{T})$ for some inner function $\psi \in H^2(\mathbb{T})$. If $E = \{0\}$ then since $A$ is injective we have $Ker(U) = \{0\}$, so that $\{f_n\}_{n \geq 0}$ is a Riesz basis. Otherwise, $E = \psi H^2(\mathbb{T})$ and by Corollary 2.2.13 in \cite{MR07}, $E$ is a cyclic invariant subspace of $S$. In particular, $E = \overline{\text{span}} \{S^{n} \psi : n \geq 0\}$. Since 
\begin{displaymath}
Ker(U) = A^*(E) = A^* (\psi H^2(\mathbb{T})) =A^* (\overline{\text{span}} \{S^{n} \psi : n \geq 0\}) = \overline{\text{span}} \{A^* S^{n} \psi : n \geq 0\} 
\end{displaymath}
\begin{displaymath}
=\overline{\text{span}} \{A^* S^{n} A c : n \geq 0\} = \overline{\text{span}} \{R^{n}c : n \geq 0\}.
\end{displaymath}
In this case we have that $Ker(U)$ is is generated by the vectors $\{R^{n} c\}_{n \geq 0}$ for some fixed scalar sequence $c \in \ell^2(\mathbb{N}_0)$ such that $A(c) = \psi \in H^2(\mathbb{T})$ is an inner function. 

For the other direction, first assume that $\{f_n\}_{n \geq 0}$ is a Riesz basis. Then since $\{f_n\}_{n \geq 0}$ is $\omega$-independent, we get $Ker(U) = \{0\}$ and thus $Ker(U)$ is invariant under right-shifts. Now, if $Ker(U)$ is generated by the vectors $\{R^{n} c\}_{n \geq 0}$ for some fixed scalar sequence $c \in \ell^2(\mathbb{N}_0)$ such that $A(c) = \psi \in H^2(\mathbb{T})$ is an inner function, then 
\begin{displaymath}
A(Ker(U)) = A(\overline{\text{span}} \{R^{n}c : n \geq 0\}) = \overline{\text{span}} \{AR^{n}c : n \geq 0\} 
\end{displaymath}
\begin{displaymath}
=\overline{\text{span}} \{AR^{n}A^*\psi : n \geq 0\} = \overline{\text{span}} \{S^{n} \psi : n \geq 0\} = \psi H^2(\mathbb{T})
\end{displaymath}
(where the last equality follows since $\overline{\text{span}} \{S^{n} \psi : n \geq 0\}$ contains all functions of the form $\psi p(z)$, where $p$ is a polynomial and polynomials are dense in $H^2(\mathbb{T})$).  By Beurling  \cite{B48} we get that $\psi H^2(\mathbb{T})$ is invariant under $S$ for any inner function $\psi$. Since $Ker(U) = A^*(\psi H^2(\mathbb{T}))$ it follows that $Ker(U)$ is invariant under right-shifts. 
\end{proof}
\medskip

\begin{proposition}
Assume $\{f_n\}_{n \geq 0}$ is an overcomplete frame that admits the form $\{T^{n} f_0\}_{n \geq 0}$ with $T$ bounded. Then the system $\{R^{n} c\}_{n \geq 0}$ in Theorem 12 is a Parseval
frame for $Ker(U)$.
\end{proposition}
\begin{proof}
Let $\psi = A(c)$ as in Theorem 12. We have  $A^* S^n \psi = R^n c$ for each $n \in \mathbb{N}_0$. As $A$ is unitary, we only need to show that $\{S^n \psi\}_{n \geq 0}$ is a Parseval frame for $\psi H^2(\mathbb{T})$. Let $f \in \psi H^2(\mathbb{T})$. Then $f = \psi h$ for some $h \in H^2(\mathbb{T})$. Note that 
\begin{displaymath}
\|f\|^2 = \|\psi h\|^2 = | \langle \psi h, \psi h \rangle| = | \langle \psi \overline{\psi} h, h \rangle| = | \langle h, h \rangle| = \|h\|^2 .
\end{displaymath}

Thus as $\{z^n\}_{n \geq 0}$ is an orthonormal basis for $H^2(\mathbb{T})$, we have
\begin{displaymath}\underset{n \geq 0}{\sum} | \langle S^n \psi, f \rangle|^2 = \underset{n \geq 0}{\sum} | \langle z^n \psi, \psi h \rangle|^2 =  \underset{n \geq 0}{\sum} | \langle z^n \psi \overline{\psi}, h  \rangle|^2  = \underset{n \geq 0}{\sum} | \langle z^n, h \rangle|^2 = \|h\|^2 = \|f\|^2.
\end{displaymath}
That is, $\{S^n \psi\}_{n \geq 0}$ is a Parseval frame for $\psi H^2(\mathbb{T})$. It follows that $\{A^* S^n \psi\}_{n \geq 0} = \{R^{n} c\}_{n \geq 0}$ in Theorem 12 is a Parseval frame for $A^*(\psi H^2(\mathbb{T})) = Ker(U)$.
\end{proof}
\medskip

\begin{corollary}
Assume $\{f_n\}_{n \geq 0}$ is an overcomplete frame that admits the form $\{T^{n} f_0\}_{n \geq 0}$ with $T$ bounded. Then the system $\{R^{n} c\}_{n \geq 0}$ in Theorem 12 is an orthonormal basis for $Ker(U)$.
\end{corollary}
\begin{proof}
We have $\{z^n\}_{n \geq 0}$ is an orthonormal basis for $H^2(\mathbb{T})$. It follows that $\{\theta z^n\}_{n \geq 0} = \{S^{n} \theta : n \geq 0\}$ is an orthonormal basis for $A(Ker(U)) = \theta H^2(\mathbb{T})$ since for any $ m, n \in \mathbb{N}_0$ such that $m \neq n$ we have 
\begin{displaymath}
\langle S^n \theta, S^m \theta \rangle = \langle \theta z^n , \theta z^m  \rangle=  \langle \theta \overline{\theta }z^n , z^m \rangle = \langle z^n , z^m  \rangle = 0
\end{displaymath}
and since $\{S^{n} \theta : n \geq 0\}$ is complete in $A(Ker(U))$, we have that it is a complete orthonormal set in $A(Ker(U))$, that is an orthonormal basis for $A(Ker(U))$. Since 
$A$ is unitary and $A^* (S^n \theta) = R^n c$ for each $n$, it follows that $\{R^{n} c: n \geq 0\}$ is an orthogonal set and thus a minimal Parseval frame for $Ker(U)$. The result follows as a minimal Parseval frame is in fact an orthonormal basis.

\end{proof}

%Some of the simplest invariant subspaces of the operator $M_z$ are those comprised of functions in $H^2(\mathbb{T})$ which are the radial limits of functions in $H^2(\mathbb{D})$  with zeros on some subset of $\mathbb{D}$. For example, for any $z_0 \in \mathbb{D}$, the set $\{f \in H^2(\mathbb{D}): f(z_0) =0\}$ is an invariant subspace of $M_z$. By Beurling's Theorem, we get $\{f \in H^2(\mathbb{D}): f(z_0) =0\} = \psi H^2(\mathbb{D})$ for some inner function $\psi$. In particular, the inner function $\psi_0(z) = \frac{z_0 - z}{1 - \overline{z_0} z}$ satisfies $\{f \in H^2(\mathbb{D}): f(z_0) =0\} = \psi_0 H^2(\mathbb{D})$. Given $\{z_1, \ldots, z_k\} \subset \mathbb{D}$, the set $\{f \in H^2(\mathbb{D}): f(z_1)= \ldots = f(z_k) = 0\}$ is also an invariant subspace of $M_z$ and the inner function $\phi(z) = \underset{j=1}{\overset{k}{\prod}} \frac{z_j -z}{1 - \overline{z_j}z}$ satisfies $\{f \in H^2(\mathbb{D}): f(z_1)= \ldots = f(z_k) = 0\} = \phi H^2(\mathbb{D})$. Functions that are constant multiples of products of the form $\phi$ above are known as finite Blaschke products. 

\begin{definition}
%Let $d\ \in \mathbb{T}$, $r \in \mathbb{N}_0$, and $\{\rho_j\} \subset \mathbb{D} \setminus \{0\}$ satisfy $\underset{j=1}{\overset{\infty}{\sum}} (1-|\rho_j|)<\infty$. A Blaschke product in $H^2(\mathbb{T})$ is an inner function that is the radial limit of an function of the form $\psi(z) = dz^r\underset{j=1}{\overset{\infty}{\prod}} \frac{\overline{\rho_j}}{|\rho_j|}\cdot\frac{\rho_j -z}{1 - \overline{\rho_j}z}$. 
Given $k< \infty$, a finite Blaschke product is an inner function of the form 
\begin{displaymath}
\phi(z) = d\underset{j=1}{\overset{k}{\prod}} \frac{\lambda_j -z}{1 - \overline{\lambda_j}z} 
\end{displaymath}
where $\{\lambda_1, \ldots \lambda_k\} \subset \mathbb{D}$ and $d \in \mathbb{T}$.
\end{definition}

\begin{proposition}
Suppose an overcomplete frame, $\{f_n\}_{n \geq 0}$,  admits the form $\{T^{n} f_0\}_{n \geq 0}$ with $T$ bounded. Then the inner function, $A(c)$, in Theorem 12 is not a finite Blaschke product.
\end{proposition}
\begin{proof}
Let $\{f_n\}_{n \geq 0}$ be an overcomplete frame that  admits the form $\{T^{n} f_0\}_{n \geq 0}$ with $T$ bounded. Then by Theorem 12, we have $Ker(U)$ is generated by the vectors $\{R^{n} c\}_{n \geq 0}$ for some fixed scalar sequence $c \in \ell^2(\mathbb{N}_0)$ such that $A(c) \in H^2(\mathbb{T})$ is an inner function. If the inner function, $A(c)$, is a finite Blaschke product, then by Theorem 3.14 in \cite{RR03} the subspace $H^2(\mathbb{T}) \ominus \phi H^2(\mathbb{T}) = (\phi H^2(\mathbb{T}))^\perp$ has dimension $k < \infty$. 
\newline
Since $A(Ker(U)) = \phi H^2(\mathbb{T})$ and as $A$ is unitary, we have for any $r \in Ker(U)^\perp$ and $h \in Ker(U)$ that 
\begin{displaymath}
    0 = \langle r, h \rangle = \langle A(r), A(h) \rangle
\end{displaymath}
so that $A(r) \in (\phi H^2(\mathbb{T}))^\perp$ and thus $dim(Ker(U)^\perp)$ is finite as $A(Ker(U)^\perp) \subset (\phi H^2(\mathbb{T}))^\perp$. However, as $Ker(U)^\perp = \overline{Rng}(U^*)$ and $U^* :H \to \ell^2(\mathbb{N}_0)$ is the adjoint of the synthesis operator of the frame $\{T^{n} f_0\}_{n \geq 0}$, by Lemma 5.2.1 in \cite{Chr03}, $U^*$ is injective. Since $H$ is infinite-dimensional, this implies $\overline{Rng}(U^*)$ is infinite-dimensional as well, so that $dim(Ker(U)^\perp)$ cannot be finite, a contradiction. Thus $A(c)$ cannot be a finite Blaschke product. 
\end{proof}
\medskip
The following definitions and theorems will be important for us in the sequel.
\begin{definition}
Let $H, \, K$ be complex separable infinite-dimensional Hilbert Spaces. Given $T \in B(H)$ and $A \in B(K)$ we say the pairs $(T, f)$ and $(A, g)$ are similar and write $(T, f) \cong (A, g)$ if there exists $L \in GL(H, K)$ such that $LTL^{-1} = A$ and  $Lf = g$.
\end{definition}

\begin{definition}
A model space, $K_{\theta}$, is a subspace of $H^2(\mathbb{T})$ of the form $K_{\theta} = H^2(\mathbb{T}) \ominus \theta H^2(\mathbb{T})$ for some shift-invariant subspace $\theta H^2(\mathbb{T})$.
\end{definition}

\begin{definition}
Let $K_{\theta}$ be a model space. The map $S_{\theta} = P_{K_{\theta}} \left.S\right|_{K_{\theta}}$
is the compression of the shift to $K_{\theta}$ where $P_{K_{\theta}}$ is the orthogonal projection onto $K_{\theta}$.
\end{definition}

\begin{theorem*}[Theorem 3.14 in \cite{RR03}]
A model space $K_{\theta} = H^2(\mathbb{T}) \ominus \theta H^2(\mathbb{T})$ is of finite dimension if and only if the inner function $\theta$ is a finite Blaschke product. 
\end{theorem*}
 
In Theorem 12 above we have a characterization of the frames that can be represented as iterations of an operator on a single vector. That is, in Theorem 12 we were given a frame a priori and conditions were provided so that the given frame had the form $\{T^nf_0\}_{n \geq 0}$. However, the following theorem found in \cite{CHP20} gives a necessary and sufficient condition for iterations of a bounded operator on a single vector to generate a frame for an separable infinite dimensional Hilbert space. This theorem characterizes the systems $\{T^nf_0\}_{n \geq 0}$ that form a frame for $H$. In this theorem, the aim is not to provide conditions for rewriting a given frame as iterations of a bounded operator on a vector, but to give a solution to the Dynamical Sampling problem as posed in \cite{AA16}.
\begin{theorem*}[Theorem 3.6 in \cite{CHP20}]
Let $T \in B(H)$ and $f_0 \in H$. Then the following hold:

$(i.)$ $\{T^nf_0\}_{n \geq 0}$ is an overcomplete frame for $H$ if and only if $(T, f_0)\cong (S_{\theta}, P_{K_\theta} {1}_{\mathbb{T}})$ where $\theta (z) \in H^2(\mathbb{T})$ is an inner function that is not a finite Blashke product.

\medskip
$(ii.)$ $\{T^nf_0\}_{n \geq 0}$ is a Riesz basis for $H$ if and only if  $(T, f_0)\cong (S, {1}_{\mathbb{T}})$. 

\end{theorem*}
\begin{proposition}
Let $\{T^nf_0\}_{n \geq 0}$ be an overcomplete frame for $H$. The inner function in Theorem 3.6 of \cite{CHP20} and the one given in Theorem 12 differ at most by a unimodular constant factor.
\end{proposition}
\begin{proof}
%By Proposition 2.3 in \cite{CH19}, given a frame has the form $\{T^nf_0\}_{n \in \mathbb{N}_0}$, the frame is linearly independent. Thus %due to the identification of  $H^2(\mathbb{T})$ with $H^2(\mathbb{D})$ 
 Let $c \in Ker(U)$ be the scalar sequence given in Theorem 12 such that $Ac = \psi$ with $A(Ker(U)) = \psi  H^2(\mathbb{T})$. Let  $\mathcal{F}: H^2(\mathbb{T}) \to \ell^2(\mathbb{N}_0)$ be the Fourier Transform and let $V = U \mathcal{F}$. We aim to show that for the shift-invariant subspace $Ker(V) = \phi H^2(\mathbb{T})$, for some inner function $\phi$, we have $\phi H^2(\mathbb{T}) = \psi  H^2(\mathbb{T})$ so that by Theorem 3.9 in \cite{RR03}, the inner functions $\phi$ (given as $h$ in Theorem 3.6 of \cite{CHP20}) and $\psi$ differ by a unimodular constant factor. 
 
 Let $f \in Ker(V)$. Since $f \in H^2(\mathbb{T})$ we have $f = \underset{n \geq 0}{\sum}a_n z^n$ for some 
 \newline
 $\{a_n\}_{n \in \mathbb{N}_0} \in \ell^2(\mathbb{N}_0)$. Note that $\mathcal{F} f = \{a_n\}_{n \in \mathbb{N}_0}$ so that as 
 $0 = Vf = U \mathcal{F} f = U \{a_n\}_{n \in \mathbb{N}_0}$,
 we get $\{a_n\}_{n \in \mathbb{N}_0} \in Ker(U)$. Since $f = \underset{n \geq 0}{\sum}a_n z^n = A \{a_n\}_{n \in \mathbb{N}_0}$ we have $ f \in A(Ker(U))$. That is $ \phi  H^2(\mathbb{T}) \subset \psi  H^2(\mathbb{T})$.
 
Now, let $f \in A(Ker(U))$. Then $f = \underset{n \geq 0}{\sum}c_n z^n $ for some $\{c_n\}_{n \in \mathbb{N}_0} \in Ker(U)$. Thus, $Vf = U \mathcal{F} f = U (\{c_n\}_{n \in \mathbb{N}_0}) = 0$. That is, $f \in Ker(V) $ so that $ \psi  H^2(\mathbb{T}) \subset \phi  H^2(\mathbb{T})$. Hence $\phi H^2(\mathbb{T}) =  \psi H^2(\mathbb{T})$. By Theorem 3.9 in \cite{RR03} it follows that $\phi$ and $\psi$ differ at most by a unimodular constant factor.
\end{proof}
\medskip
In Theorem 3.6 of \cite{CHP20} when the frame $\{T^n f_0\}_{n \geq 0}$ is a Riesz basis, the function $\phi$ ($h$ in their notation) is 0. Similarly in Theorem 12, since $Ker(U) = \{0\}$ in this case, the sequence is zero so that $\psi = Ac = 0$. Thus the functions are equal in this case.
\newline

In \cite{CHP20} the authors pose the question of whether it is possible, for a given $T \in B(H)$, to characterize the set of vectors $f_0$ such that $\{T^n f_0 \}_{n \geq 0}$ is a frame. In Proposition 3.10 of \cite{CHP20}, the set, $\mathcal{G}_T = \{f_0 \in H : \{T^n f_0 \}_{n \geq 0}$ is a frame\}, of vectors that can be used to generate a frame via iterations of $T$ is characterized in the following way. Given a vector $f_0$ such that $\{T^n f_0 \}_{n \geq 0}$ is a frame, $\mathcal{G}_T = \{Vf_0: V \in \{T\}' \cap GL(H)\}$. That is the set $\mathcal{G}_T$ consists of all the images of $f_0$ under a bounded invertible operator in the commutant of $T$. Since by Theorem 3.6, $\{T^n f_0 \}_{n \geq 0}$ is a Riesz basis if and only if $(T, f_0)\cong (S, {1}_{\mathbb{T}})$ so that $T \sim S$, and $\{T^n f_0 \}_{n \geq 0}$ is an overcomplete frame if and only if $(T, f_0)\cong (S_{\theta}, P_{K_\theta} {1}_{\mathbb{T}})$ so that $T \sim S_\theta$ (where $S_\theta$ is the compression of the shift operator to an infinite dimensional model space $K_\theta$), we can explicitly give those operators in $\{T\}' \cap GL(H)$ as precisely the operators that are similar to an invertible analytic Toeplitz operator on $H^2(\mathbb{T})$.

A characterization of the operators that commute with the unilateral shift, $S \in B(H^2(\mathbb{T}))$, due to  Brown and Halmos \cite{B64}, showed that the commutant of the shift,  $\{S\}', \, $consists solely of the analytic Toeplitz operators on $H^2(\mathbb{T})$. 
%Toeplitz operators are, in a sense, compressions of multiplication operators on to $H^2(\mathbb{T})$.

\begin{definition}
For every $\psi \in L^{\infty}(\mathbb{T})$, the Toeplitz operator on $H^2(\mathbb{T})$ with symbol $\psi$ is the operator $M_{\psi}$ defined by 
\begin{displaymath}
M_{\psi} f(z) = P_{H^2(\mathbb{T})} \psi(z) f(z), \, \, \, \forall f \in H^2(\mathbb{T}).
\end{displaymath}
\end{definition}
\begin{definition}
A Toeplitz operator,\,$M_{\psi}$\,, is an analytic Toeplitz operator if
\begin{displaymath}
    \psi \in H^2(\mathbb{T}) \cap L^{\infty}(\mathbb{T}) = H^{\infty}(\mathbb{T}).
\end{displaymath}
\end{definition}
%\begin{theorem}[3.3.8 Operators Hardy Hilbert Rosenthal]
%If $\psi \in H^{\infty}$, then $\sigma(M_\psi)$ is the closure of $\psi(\mathbb{D})$
%\end{theorem}
\begin{theorem*}[Brown and Halmos]
An operator $\Omega \in B(H^2(\mathbb{T}))$ satisfies $\Omega S = S \Omega$ if and only if $ \Omega = M_\psi$ for some $\psi \in H^{\infty}(\mathbb{T})$.
\end{theorem*}
This result due to Brown and Halmos shows that the commutant of the shift operator, $\{S\}' \, $, on $H^2(\mathbb{T})$ is comprised of all the analytic Toeplitz operators on $H^2(\mathbb{T})$.

Due to the identification of functions in $H^2(\mathbb{T})$ and $H^2(\mathbb{D})$, regarding $\psi \in H^2(\mathbb{T})$ as it corresponding function in $H^2(\mathbb{D})$, we can make use of the following theorem.

\begin{theorem*}[3.3.8 in \cite{MR07}]
If $\psi \in H^{\infty}(\mathbb{D})$, then the spectrum, $\sigma(M_\psi)$, of the analytic Toeplitz operator $M_\psi$ is the closure of $\psi(\mathbb{D})$.
\end{theorem*}
From this we see that for a given function $\psi \in H^{\infty}(\mathbb{D})$, as long as $0$ is not in the closure of the range of the function, the associated analyitic Toeplitz operator $M_\psi$ will be invertible. 

\begin{corollary}
Let $\psi \in H^{\infty}(\mathbb{D})$. The analytic Toeplitz operator $M_\psi$ is invertible if and only if $\psi \in H^{\infty}(\mathbb{D})$ is bounded away from $0$.
\end{corollary}
\begin{proof}
The result follows immediately from the theorem above.
\end{proof}

\begin{proposition}
Let $\{T^n f_0 \}_{n \geq 0}$ be a Riesz basis for $H$ so that $T = LSL^{-1}$ for some $L \in GL(H^2(\mathbb{T}), H)$. Then $\mathcal{G}_T = \{Af_0: A = LM_\varphi L^{-1}$ where $\varphi$ is the radial limit of a function in $H^{\infty}(\mathbb{D})$ which is bounded away from $0$\}.
\end{proposition}
\begin{proof}
The commutant of the shift operator, $S$, is the set of all analytic Toeplitz operators on $H^2(\mathbb{T})$. Since $T = LSL^{-1}$ and when $M_\varphi$ is invertible, so is $LM_\varphi L^{-1}$ we have for any operator $A = LM_\varphi L^{-1}$,
\begin{displaymath}
    TA = LSL^{-1} LM_\varphi L^{-1} = LS M_\varphi L^{-1} = L M_\varphi SL^{-1} = L M_\varphi L^{-1}LSL^{-1} = AT
\end{displaymath}
 That is $A \in \{T\}' \cap GL(H)$. For the reverse containment, let $A \in \{T\}' \cap GL(H)$. That is $AT = TA$ where $A$ is invertible. Since $L^{-1}TL = S$, we have  
 \begin{displaymath}
     L^{-1}ALS= L^{-1}ATL =  L^{-1}TAL = S L^{-1}AL
 \end{displaymath}
  so that $L^{-1}AL$ is in the commutant of $S$. This implies that $L^{-1}AL$ is an invertible analytic Toeplitz operator on $H^2(\mathbb{T})$. That is $\mathcal{G}_T = \{Af_0: A = LM_\varphi L^{-1}$ where $\varphi$ is the radial limit of a function in $H^{\infty}(\mathbb{D})$ which is bounded away from $0$\}.
\end{proof}
\medskip
In order to characterize the set $\mathcal{G}_T$ in the case where $\{T^n f_0 \}_{n \geq 0}$ is an overcomplete frame, we need to use the fact that $T \sim S_\theta$ where $S_\theta = P_{K_\theta}\left.S\right|_{K_{\theta}}$. That is, $T$ is similar to a compression of the unilateral shift to some model space $K_{\theta}$. 

\begin{definition}
Let $\theta$ be an inner function and ${K_{\theta}}$ be the associated model space. A truncated analytic Toeplitz operator is a compressed analytic Toeplitz operator. That is, for any analytic Toeplitz operator, $M_\psi$, its compression to the model space
\begin{displaymath}
    M_{\psi}^{\theta} = P_{K_\theta}\left.M_\psi\right|_{K_{\theta}}
\end{displaymath}
is a truncated analytic Toeplitz operator.
\end{definition}

The commutant of the compression of the shift for some inner function, $\{S_\theta\}'$, was described by Sarason in \cite{S67}. This characterization of the operators in $\{S_\theta\}'$ by Sarason is known as the Commutant Lifting Theorem for the Compression of the Shift \cite{GMR16}. The result shows that the commutant of the compression of the shift is precisely the set of truncated analytic Toeplitz operators.
\begin{theorem*}[Sarason]
Let $\theta$ be a nonconstant inner function. Let $S_\theta = P_{K_\theta}\left.S\right|_{K_{\theta}}$, that is, a compression of the unilateral shift to some model space $K_{\theta}$. An operator $\Omega \in B(K_{\theta})$ satisfies $\Omega S_\theta = S_\theta \Omega$ if and only if $\Omega = M_{\psi}^{\theta}$ for some $\psi \in H^{\infty}(\mathbb{T})$.
\end{theorem*}
A characterization of the invertible truncated analytic Toeplitz operators can be provided by applying the following result due to Fuhrmann \cite{F68}.

\begin{theorem*}[Fuhrmann]
Let $\theta$ be an inner function and $\psi \in H^{\infty}(\mathbb{D})$. Then
\begin{displaymath}
\sigma(M_{\psi}^{\theta}) = \{\lambda \in \mathbb{C} : \underset{z \in \mathbb{D}}{\text{inf}} \, \,  (|\theta(z)| + |\psi(z) - \lambda|) = 0\}.
\end{displaymath}
\end{theorem*}
\begin{corollary}
    A truncated analytic Toeplitz operator, $M_{\psi}^{\theta}$, is invertible if and only if $\theta$ and $\psi$ satisfy $\underset{z \in \mathbb{D}}{\text{inf}} \, \,  (|\theta(z)| + |\psi(z)|) > 0$.
\end{corollary}
\begin{proof}
The result follows immediately from the theorem above.
\end{proof}

We note that it is observed in Remark 3.11 in \cite{CHP20} that when $\{T^n f_0 \}_{n \geq 0}$ is an overcomplete frame, a similar characterization as the one provided in the proposition below can be given for the set $\mathcal{G}_T$. 
\begin{proposition}
Let $\{T^n f_0 \}_{n \geq 0}$ be an overcomplete frame for $H$ so that $T = L S_\theta L^{-1}$ for some $L \in GL(K_{\theta}, H)$. Then
\begin{displaymath}
\mathcal{G}_T = \{Af_0: A = LM_{\psi}^{\theta} L^{-1} \, \,  \text{where} \, \,  \theta \, \,  \text{and} \, \,   \psi \, \, \text{satisfy} \, \,  \underset{z \in \mathbb{D}}{\text{inf}} \, \,  (|\theta(z)| + |\psi(z)|) > 0\}. 
\end{displaymath}
\end{proposition}
\begin{proof}
    Applying the fact that $T \sim S_\theta$ and the theorem above due to Sarason, the result follows similarly to the proof of Proposition 27.
\end{proof}

\section
{Auxiliary Results on Operator Representations of Riesz Frames}
%\subsection{Operator Representations of Riesz Frames}
\begin{definition}
A frame $\{f_n\}_{n \in I} \subset H$ with the property that for every $J \subset I$ we have  $\{f_{n}\}_{n \in J}$ is a frame sequence with uniform frame bounds $C_1$ and $C_2$ is said to be a Riesz frame.
\end{definition}

Note that not every frame exhibits %the subframe property (satisfies the condition 
the property that every subsequence is a frame sequence (i.e. the subframe property), however every subsequence of a Riesz basis is a Riesz sequence.

%\begin{definition}
%Given $\{e_n\}_{n \in I} \subset H$ is an orthonormal basis, a Riesz basis is any family of the form $\{Ae_n\}_{n \in I}$ where $A \in B(H)$ is a bijection.
%\end{definition}

By Theorem 7.13 in \cite{Hei11}, a sequence $\{f_n\}_{n \in I} \subset H$ is a Riesz basis if and only if it is a bounded and unconditional basis for $H$. By Theorem 7.4.3 in \cite{Chr03}, every Riesz frame contains a Riesz basis. However, Riesz frames are distinct from near-Riesz bases as detailed in \cite{Chr03}. 

\begin{theorem}
An overcomplete Riesz frame in $H$ cannot admit the form $\{T^{n} \varphi \}_{n \geq 0}$ for any linear operator $T$. 
\end{theorem}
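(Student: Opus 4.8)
The plan is to reduce the statement to a purely frame-theoretic claim and then exploit the defining uniformity of Riesz frames. By Proposition 2.3 in [9], a frame $\{f_n\}_{n\in\mathbb{N}_0}$ can be written as an orbit $\{T^n\varphi\}_{n\geq 0}$ under a (possibly unbounded) linear operator $T$ if and only if it is linearly independent; indeed, one is forced to take $\varphi=f_0$ and $Tf_n=f_{n+1}$, and this prescription extends to a well-defined linear map on the span exactly when no nontrivial finite combination of the $f_n$ vanishes. Thus it suffices to prove the contrapositive: a linearly independent Riesz frame must be a Riesz basis. I will therefore assume, toward a contradiction, that $\{f_n\}_{n\in\mathbb{N}_0}$ is an overcomplete Riesz frame with uniform bounds $C_1\leq C_2$ that is nevertheless linearly independent.

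Since $\{f_n\}$ is a Riesz frame, Theorem 7.4.3 in [6] furnishes a subset $J\subset\mathbb{N}_0$ such that $\{f_n\}_{n\in J}$ is a Riesz basis for $H$; because the frame is overcomplete (hence not itself a Riesz basis) we have $J\subsetneq\mathbb{N}_0$, so I may fix an index $q\notin J$. Expanding in the Riesz basis gives $f_q=\sum_{n\in J}d_n f_n$ with $d=(d_n)\in\ell^2(J)$. The key use of linear independence enters here: if $d$ had finite support, then $f_q-\sum_{n\in J}d_n f_n=0$ would be a nontrivial finite linear dependence (coefficient $1$ on $f_q$), which is impossible; hence $d$ has infinite support, and since $\sum_{n\in J}|d_n|^2<\infty$ its nonzero entries tend to $0$. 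Let $\{h_n\}_{n\in J}$ denote the biorthogonal dual of the Riesz basis $\{f_n\}_{n\in J}$, so that $\langle h_N, f_n\rangle=\delta_{Nn}$ for $n\in J$; this dual is again a Riesz basis, with lower bound forcing $\|h_N\|^2\geq 1/B$ uniformly, where $B$ is the upper Riesz bound of $\{f_n\}_{n\in J}$.

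For any $N\in J$ with $d_N\neq 0$, consider the subset $A_N=\{q\}\cup(J\setminus\{N\})$. The identity $f_N=d_N^{-1}\bigl(f_q-\sum_{n\in J\setminus\{N\}}d_n f_n\bigr)$ shows $f_N\in\overline{\operatorname{span}}\{f_n:n\in A_N\}$, so $\{f_n\}_{n\in A_N}$ spans $H$; by the Riesz frame property it is therefore a frame for $H$ with lower bound at least $C_1$. Testing this inequality against $h_N$ and using biorthogonality, all terms indexed by $J\setminus\{N\}$ vanish, while $\langle h_N,f_q\rangle=d_N$, so that
\[
C_1\|h_N\|^2\leq \sum_{n\in A_N}\lvert\langle h_N,f_n\rangle\rvert^2=\lvert d_N\rvert^2,
\]
whence $C_1\leq \lvert d_N\rvert^2/\|h_N\|^2\leq B\,\lvert d_N\rvert^2$. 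Choosing $N$ with $0<\lvert d_N\rvert^2<C_1/B$, which is possible since the nonzero $d_N$ converge to $0$, yields $C_1<C_1$, the desired contradiction. I expect the main obstacle to be precisely this last maneuver: finite linear independence by itself is too weak to preclude overcompleteness, so the argument must convert a single near-dependency into a genuine failure of the \emph{uniform} lower bound. The decisive observation is that redundancy produces a square-summable, hence null-convergent, coefficient sequence, and that the biorthogonal dual of a contained Riesz basis supplies test vectors of controlled norm on which the subframe energy is exactly $\lvert d_N\rvert^2$; marrying these two facts against the uniform constant $C_1$ is the crux of the proof.
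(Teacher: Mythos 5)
Your proof is correct, and it shares the paper's logical skeleton: both reduce the theorem via Proposition 2.3 in [9] to the claim that a linearly independent Riesz frame must be a Riesz basis. The difference is that the paper disposes of that claim with a single citation (Lemma 2.2 in [8]), whereas you prove it from scratch, and your argument works. Starting from a contained Riesz basis $\{f_n\}_{n\in J}$ (via Theorem 7.4.3 in [6], which the paper also quotes), you expand an omitted element $f_q$ over it, note that linear independence forces the coefficient sequence $d$ to have infinite support and hence entries tending to zero, and then for each $N$ with $d_N\neq 0$ you test the subfamily indexed by $\{q\}\cup(J\setminus\{N\})$ --- complete in $H$, hence by the Riesz frame property a frame for $H$ with the uniform lower bound $C_1$ --- against the biorthogonal vector $h_N$, obtaining $C_1\|h_N\|^2\leq |d_N|^2$ and so $|d_N|^2\geq C_1/B$ for every nonzero entry, which is impossible for an $\ell^2$ sequence with infinite support. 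This replaces the paper's black-box citation with a transparent quantitative clash between redundancy (square-summable expansion coefficients) and the uniform subframe lower bound: the paper's version buys brevity, yours buys self-containedness, using only inputs already standard or quoted in the paper (the contained Riesz basis and Riesz basis duality). Two cosmetic points: under the convention that inner products are conjugate-linear in the second slot, $\langle h_N, f_q\rangle = \overline{d_N}$ rather than $d_N$, which changes nothing since only $|d_N|^2$ enters; and the final step reads more cleanly as the observation that all nonzero $|d_N|^2$ are bounded below by $C_1/B$, contradicting $d_N\to 0$, rather than as the derived inequality $C_1 < C_1$.
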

\begin{proof}
Every linearly independent Riesz frame is a Riesz basis by Lemma 2.2 in \cite{CA02}. Thus, an overcomplete Riesz frame must be a linearly dependent set. By Proposition 2.3 in \cite{CH19}, the result follows.  
\end{proof}

\begin{corollary}
A frame of the form $\{T^{n} \varphi \}_{n \geq 0}$ cannot be a Riesz frame if $T$ is a normal operator.
\end{corollary}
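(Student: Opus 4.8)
The plan is to argue by contradiction: suppose $\{T^n\varphi\}_{n\ge 0}$ is a Riesz frame with $T$ normal. Since this system is an operator orbit and a frame, Proposition 2.3 in [9] forces it to be linearly independent, and a linearly independent Riesz frame is a Riesz basis by Lemma 2.2 in [8] (exactly as in the proof of Theorem 2.1). So I may assume $\{T^n\varphi\}_{n\ge 0}$ is a Riesz basis. Then the operator $W\colon \ell^2(\mathbb{N}_0)\to H$ determined by $We_n=T^n\varphi$ on the canonical basis is a bounded bijection, and since $WSe_n=We_{n+1}=T^{n+1}\varphi=TWe_n$ we get $WS=TW$, i.e. $T=WSW^{-1}$ is similar to the right-shift $S$.

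Next I would convert the isometry identity $S^*S=I$ into a statement about $T$. Writing $S=W^{-1}TW$ and setting $P=(W^{-1})^*W^{-1}$, the relation $S^*S=I$ becomes $T^*PT=P$, where $P$ is positive and invertible, so that $mI\le P\le MI$ for some $0<m\le M$. Iterating gives $(T^*)^nPT^n=P$ for all $n$, whence $\langle PT^nx,T^nx\rangle=\langle Px,x\rangle$ and therefore the uniform two-sided estimate $\tfrac{m}{M}\|x\|^2\le\|T^nx\|^2\le\tfrac{M}{m}\|x\|^2$ for every $x\in H$ and every $n$. In particular $T$ is power-bounded, so $\sigma(T)\subseteq\overline{\mathbb{D}}$.

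The heart of the argument uses the spectral theorem for the normal operator $T$. Let $E$ be its spectral measure. If $E(\{|\lambda|\le r\})\ne 0$ for some $r<1$, choose a unit vector $x$ in its range; then $\|T^nx\|^2=\int_{|\lambda|\le r}|\lambda|^{2n}\,d\langle Ex,x\rangle\le r^{2n}\to 0$, contradicting the uniform lower bound. Hence $E$ is concentrated on $\mathbb{T}$, which (since $\sigma(T)\subseteq\overline{\mathbb{D}}$) forces $T^*T=\int_{\mathbb{T}}|\lambda|^2\,dE=I$, so the normal operator $T$ is unitary and in particular invertible. But then $S=W^{-1}TW$ would be invertible, contradicting the fact that the unilateral shift is not onto. (Equivalently, a unitary has $\sigma(T)\subseteq\mathbb{T}$, while similarity gives $\sigma(T)=\sigma(S)=\overline{\mathbb{D}}$.) This contradiction proves the corollary.

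I expect the main obstacle to be the spectral step that confines the spectral measure of $T$ to the circle: this is the only place normality is used in an essential way, and it is what upgrades the \emph{lower} orbit bound coming from the Riesz-basis (equivalently $P$-isometry) structure into unitarity. Everything else—the reduction to a Riesz basis, the passage to similarity with $S$, and the derivation of $T^*PT=P$—is routine bookkeeping once the similarity $T=WSW^{-1}$ is in hand.
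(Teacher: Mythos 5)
Your proof is correct, but it takes a genuinely different route from the paper's. The first reduction is the same in both: you invoke Proposition 2.3 in [9] (an orbit frame must be linearly independent) and Lemma 2.2 in [8] (a linearly independent Riesz frame is a Riesz basis), which is exactly the content of the paper's Theorem 3, so both arguments reduce to showing that $\{T^n\varphi\}_{n\ge 0}$ with $T$ normal cannot be a Riesz basis (equivalently, cannot be an exact frame). At that point the paper simply cites Proposition 4.1 of [4], which says that for normal $T$ an orbit $\{T^n\varphi\}_{n\ge 0}$ can never be exact, and stops. You instead prove the needed fact from scratch: the Riesz-basis hypothesis makes $T = WSW^{-1}$ similar to the unilateral shift; the identity $T^*PT = P$ with $P = (W^{-1})^*W^{-1}$ positive and invertible gives the two-sided orbit bounds $\tfrac{m}{M}\|x\|^2 \le \|T^nx\|^2 \le \tfrac{M}{m}\|x\|^2$; the spectral theorem then forces the spectral measure of the normal operator $T$ off the open disk (else the lower bound fails), so $T^*T = I$ and $T$ is unitary, hence invertible, contradicting the non-invertibility of $S$ (or, equivalently, $\sigma(T)\subseteq\mathbb{T}$ versus $\sigma(T)=\sigma(S)=\overline{\mathbb{D}}$). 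In effect you have reproved, in the special case needed here, the known fact that the unilateral shift is not similar to any normal operator. The trade-off: the paper's proof is two lines but leans on a nontrivial external result (which is stronger, ruling out even exactness/minimality of normal orbits), whereas yours is self-contained, needing only the spectral theorem and elementary shift facts, and it exposes the operator-theoretic mechanism behind the obstruction. One cosmetic slip: your reference to ``Theorem 2.1'' should point to the paper's Theorem 3 (the statement that an overcomplete Riesz frame admits no orbit representation); the mathematics you attribute to it is the right one.
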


\begin{proof}
 If $\{T^{n} \varphi\}_{n \geq 0}$ is a Riesz frame, then the system must be minimal by Theorem 32. That is, if $\{T^{n} \varphi\}_{n \geq 0}$ is a Riesz frame, then the frame is exact. However, when $T$ is normal, by Proposition 4.1 in \cite{ACM16}, a system of the form $\{T^{n} \varphi\}_{n \geq 0}$ can never be exact.
\end{proof}

\begin{remark*}
It was proved in \cite{ACM16} that for a normal operator $T$, a system $\{T^{n} \varphi \}_{n \geq 0}$ is a frame if and only if $T = \underset{j \in I}{\sum} \lambda_j P_j$ where $P_j$ is a rank one orthogonal projection for each $j$ and $\{\lambda_k\}_{k \in I} \subset \mathbb{D}$ is a uniformly separated sequence such that $|\lambda_k| \to 1$ and $C_1 \leq \frac{\|P_j \varphi \|}{\sqrt{1-|\lambda_k|^2}} \leq C_2$, for some positive constants $C_1$ and $C_2$. By Corollary 37, frames generated by orbits of operators of this form can never admit the subframe property with uniform frame bounds.
\end{remark*}
\begin{proposition}
Every Riesz frame can be represented as a finite union of the form 
\newline 
$\underset{1 \leq i \leq N} \bigcup \{T_{i}^{n} \varphi_i\}_{n \geq 0}$ with $T_i \in B(H)$ for each $i \in \{1, \ldots, N\}$. 
\end{proposition}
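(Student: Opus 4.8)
My plan is to reduce the statement to a purely structural fact: that a Riesz frame partitions into finitely many infinite Riesz sequences. The reduction is immediate once one observes that every infinite Riesz sequence is itself a bounded orbit. Indeed, if $\{g_k\}_{k\geq 0}$ is a Riesz sequence with bounds $a\leq b$, define $T$ on $\overline{\text{span}}\{g_k\}$ by $Tg_k=g_{k+1}$ and set $T=0$ on the orthogonal complement; the Riesz bounds give $\|T\sum c_k g_k\|^2=\|\sum c_k g_{k+1}\|^2\leq b\|c\|_{\ell^2}^2\leq (b/a)\|\sum c_k g_k\|^2$, so $T\in B(H)$ and $\{T^n g_0\}_{n\geq 0}=\{g_k\}_{k\geq 0}$. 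Thus a partition of the frame into $N$ infinite Riesz sequences yields exactly the desired representation, with $\varphi_i=g_0^{(i)}$ and $T_i$ the associated shift.

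The heart of the argument is a greedy extraction that provably terminates. By Theorem 7.4.3 in [6] the Riesz frame $\{f_n\}$, with uniform bounds $C_1\leq C_2$, contains a Riesz basis $B_0$ for $H$; being a subfamily it inherits the uniform lower frame bound, so its frame operator satisfies $S_{B_0}\geq C_1 I$. Since the frame operators of disjoint subfamilies add, the remainder $\mathcal{F}_1=\{f_n\}\setminus B_0$ has frame operator $S_{\mathcal{F}_1}=S-S_{B_0}\leq (C_2-C_1)I$, while $\mathcal{F}_1$ still satisfies the uniform lower bound $C_1$ on its closed span. Hence $\mathcal{F}_1$ is a Riesz frame for its span whose upper bound has dropped by at least $C_1$. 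Iterating -- extract a Riesz basis $B_i$ for the span of $\mathcal{F}_i$ and set $\mathcal{F}_{i+1}=\mathcal{F}_i\setminus B_i$ -- the upper bound decreases by at least $C_1$ at each stage, yet a nonempty frame can never have upper bound below its lower bound $C_1$. Therefore the process halts after at most $\lceil C_2/C_1\rceil$ steps, exhibiting $\{f_n\}$ as a finite union of Riesz sequences $B_0,\dots,B_{N-1}$, each a Riesz basis for a nested subspace of $H$.

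The remaining issue, which I expect to be the main obstacle, is that a greedy step may leave a finite piece (for example $\{e_0,e_0,e_1,e_2,\dots\}$ produces the singleton $\{e_0\}$), whereas an orbit $\{T^n\varphi\}_{n\geq0}$ must be infinite. To repair this, let $F$ collect the finitely many leftover vectors and let $R=\{g_k\}_{k\geq 0}$ be one of the infinite pieces, a Riesz basis for its span $W$. I would split $R$ into $|F|+1$ infinite Riesz subsequences $A_0,\dots,A_{|F|}$ by residue classes and prepend each $h\in F$ to a distinct $A_\ell$ chosen so that $h\notin\overline{\text{span}}(A_\ell)$; such a choice exists because $h$ has a nonzero coordinate in the Riesz basis $R$, and it suffices that $A_\ell$ omit a witnessing index. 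Prepending a vector outside the closed span of a Riesz sequence preserves the Riesz property -- the projection onto $W^\perp$ controls the new coefficient while the original lower bound controls the rest -- so each $\{h\}\cup A_\ell$ is again an infinite Riesz sequence. This replaces the finite pieces without altering the union, and the first step then realizes $\{f_n\}$ as a union of at most $N+|F|$ bounded operator orbits. The decisive idea is the frame-operator subtraction forcing termination in $O(C_2/C_1)$ rounds; the delicate bookkeeping is confined to the final infinite-ization, where the Riesz bounds must be preserved while the finite surplus is redistributed.
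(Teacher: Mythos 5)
Your proof is correct, but it follows a genuinely different and more self-contained route than the paper's. The paper's proof consists of two citations: Corollary 2.5 of [8] (every Riesz frame is a finite union of Riesz sequences) is used only to conclude that a Riesz frame is norm-bounded below, and then Theorem 2.1 of [9] --- every frame that is norm-bounded below is a finite union of orbits $\{T_i^n\varphi_i\}_{n\geq 0}$ with $T_i$ bounded --- finishes the argument; the proof of that theorem in [9] rests on the Feichtinger theorem (the Marcus--Spielman--Srivastava resolution of Kadison--Singer) together with essentially the same shift construction you give in your first paragraph. You replace both citations with elementary arguments tailored to Riesz frames: your greedy extraction of Riesz bases, with the frame-operator subtraction $S_{\mathcal{F}_{i+1}}=S_{\mathcal{F}_i}-S_{B_i}\leq S_{\mathcal{F}_i}-C_1P_{W_i}$ forcing termination after at most $\lceil C_2/C_1\rceil$ rounds, is in the spirit of the proof of [8, Cor.\ 2.5] and completely bypasses the Kadison--Singer machinery; and you explicitly confront the finite-leftover problem that the paper silently delegates to [9]. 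Your fix for it is sound: expansions in a Riesz basis are unique, so a residue class omitting a witnessing index excludes $h$ from its closed span, and prepending a vector lying outside the closed span of a Riesz sequence again yields a Riesz sequence. Two small repairs are needed there: the controlling projection is the one onto $(\overline{\text{span}}\,A_\ell)^\perp$, not onto $W^\perp$ (when $h\in W$ the latter annihilates $h$); and the simultaneous choice of \emph{distinct} classes for the distinct leftover vectors deserves a word --- each $h$ rules out at most one of the $|F|+1$ classes, so a greedy assignment (or Hall's theorem) succeeds. The trade-off is clear: the paper buys brevity by leaning on two nontrivial external results, while your argument is longer but elementary and self-contained, making explicit both where the uniform Riesz-frame bounds enter and why the infinite pieces can absorb the finite residue.
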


\begin{proof}
Every Riesz frame can be represented as a finite union of Riesz sequences by Corollary 2.5 in \cite{CA02}. Each Riesz sequence in the finite union is bounded, so a Riesz frame must be  norm bounded below. By Theorem 2.1 in \cite{CH19}, the result follows.
\end{proof}
%\begin{definition}
%{f_n\}_{n \in \mathbb{N}_0}$, is the map $U: \ell^2(\mathbb{N}_0) \to H$ such that $U \{c_n\}_{n \in \mathbb{N}_0} = \underset{n \geq 0}{\sum} c_n f_n$.
%\end{definition}

\section{Appendix}
 \begin{lemma*}[Lemma 2.2 in \cite{CA02}]
Let $\{f_k\}_{k \in I}$ be a frame for $H$. Let $\{I_n\}_{n \in \mathbb{N}}$ be a family of finite subsets of $I$ such that 
\begin{displaymath}
    I_1 \subset I_2 \subset I_3 \uparrow I.
\end{displaymath}
Let $A_n$ denote the optimal lower frame bound for $\{f_k\}_{k \in I}$ as a frame for its span. Then the following are equivalent:
\newline
$(i.)$ $\{f_k\}_{k \in I}$ is a Riesz basis.
\newline
$(ii.)$ $\{f_k\}_{k \in I}$ is $\omega$\text{-}independent.
\newline
$(iii.)$ $\{f_k\}_{k \in I}$ is linearly independent and $inf_{n} A_n >0 $
\end{lemma*}

\begin{proposition*}[Proposition 2.3 in \cite{CH19}]
Consider any sequence $\{f_k\}_{k=1}^{\infty}$ in $H$ for which $\text{\text{span}} \{f_k\}_{k=1}^{\infty}$ is infinite-dimensional. Then the following are equivalent: 
\newline
$(i.)$ $\{f_k\}_{k=1}^{\infty}$ is linearly independent. 
\newline
$(ii.)$ There exists a linear operator $T: \text{\text{span}} \{f_k\}_{k=1}^{\infty} \to H$ such that $\{f_k\}_{k=1}^{\infty} = \{T^n f_1\}_{n=0}^{\infty}$.
\end{proposition*}

\begin{proposition*} [Proposition 4.1 in \cite{ACM16}]
If $A$ is a normal operator on $H$, then for any set of vectors $\mathcal{G} \subset H$, the system of iterates $\{A^n g\}_{{g \in \mathcal{G}},  n \geq 0}$ is not a complete and minimal system in $H$.
\end{proposition*}

\begin{theorem*}[Theorem 2.1 in in \cite{CH19}]
Consider a frame $\{f_k\}_{k=1}^{\infty}$ which is norm-bounded below. Then the following hold:
\newline
$(i.)$ The frame $\{f_k\}_{k=1}^{\infty}$ can be decomposed as a finite union 
\begin{displaymath}
    \{f_k\}_{k=1}^{\infty} = \underset{1 \leq j \leq J} \bigcup \{f_{k}^{(j)} \}_{k \in I_j}
\end{displaymath}
where each of the sequences $\{f_{k}^{(j)} \}_{k \in I_j}$ is an infinite Riesz sequence. 
\newline
$(ii.)$  There is a finite collection of vectors from $\{f_k\}_{k=1}^{\infty}$ to be called $\varphi_1 \ldots \varphi_J$, and corresponding bounded operators $T_j: H \to H$ with closed range, such that
\begin{displaymath}
    \{f_k\}_{k=1}^{\infty} = \underset{1 \leq j \leq J} \bigcup \{T_{j}^{n} \varphi_j\}_{n \geq 0}
\end{displaymath}
\end{theorem*}

%\begin{theorem}[Theorem 2.1.9 in \cite{MR07}]
%The shift operator $S$ on $H^2(\mathbb{T})$ is unitarily equivalent to $R$.
%\end{theorem}

\begin{theorem*}[Theorem 2.2.1 in \cite{MR07}]
The only reducing subspaces of $R$ are $\{ 0 \}$ and the entire space. 
\end{theorem*}

\begin{theorem*}[Beurling's Theorem]
  Every nontrivial invariant subspace of the shift operator $S \in B(H^2(\mathbb{T})) $, where $Sf(z) = zf(z)$, is of the form $\theta H^2(\mathbb{T})$ for some inner function $\theta$. Conversely, for any inner function $\theta$, the subspace $\theta H^2(\mathbb{T})$ is invariant under $S$.
\end{theorem*}

\begin{corollary*}[Cor 2.2.13 in \cite{MR07}]
Every invariant subspace of $S$ is cyclic.
\end{corollary*}

%\begin{theorem*}[Theorem 3.14 in \cite{RR03}]
%If $\{\lambda_1, \ldots \lambda_k\} \subset \mathbb{D}$ and if 
%\begin{displaymath}
%\phi(z) = \underset{j=1}{\overset{k}{\prod}} \frac{\lambda_j -z}{1 - \overline{\lambda_j}z} 
%\end{displaymath}
%then $H^2(\mathbb{T}) \ominus \phi H^2(\mathbb{T})$ has dimension k. Conversely every shift-invariant subspace of codimension k has this form. 
%\end{theorem*}

%\begin{theorem}[Theorem 3.6 in \cite{CHP20}]
%A system $\{T^{n} \varphi \}_{n \geq 0} \subset H$, with $T \in B(H)$ is an overcomplete frame if and only if  $(T, \varphi) \cong (S_{\theta}, P_{K_\theta}1_\mathbb{T})$ where $\theta$ is some unique inner function that is not a finite Blaschke product.
   %\end{theorem}

\section
{Conflict of Interest Statement}
The corresponding author states that there is no conflict of interest. 

\end{document}